\def\draft{n}
\newtheorem{theorem}{Theorem}[section]
\theoremstyle{definition}
\newtheorem{lemma}[theorem]{Lemma}
\newtheorem{definition}[theorem]{Definition}
\newtheorem{remark}[theorem]{Remark}
\newtheorem{corollary}[theorem]{Corollary}
\def\printname#1{
        \if\draft y
                \smash{\makebox[0pt]{\hspace{-0.5in}
                        \raisebox{8pt}{\tt\tiny #1}}}
        \fi
}
\newcommand{\psdraw}[2]
         {\begin{array}{c} \hspace{-1.3mm}
        \raisebox{-4pt}{\epsfig{figure=draws/#1.eps,width=#2}}
        \hspace{-1.9mm}\end{array}}
\newlength{\standardunitlength}
\long\def\@makecaption#1#2{%
     \vskip 10pt

\setbox\@tempboxa\hbox{
       \small\sf{\bfcaptionfont #1. }\ignorespaces #2}%
     \ifdim \wd\@tempboxa >\captionwidth {%
         \rightskip=\@captionmargin\leftskip=\@captionmargin
         \unhbox\@tempboxa\par}%
       \else
         \hbox to\hsize{\hfil\box\@tempboxa\hfil}%
     \fi}
\font\bfcaptionfont=cmssbx10 scaled \magstephalf
\newdimen\@captionmargin\@captionmargin=2\parindent
\newdimen\captionwidth\captionwidth=\hsize
\def\lbl#1{\label{#1}\printname{#1}}
\newdimen\tableauside\tableauside=1.0ex
\newdimen\tableaurule\tableaurule=0.4pt
\newdimen\tableaustep
\def\phantomhrule#1{\hbox{\vbox to0pt{\hrule height\tableaurule
width#1\vss}}}
\def\phantomvrule#1{\vbox{\hbox to0pt{\vrule width\tableaurule
height#1\hss}}}
\def\sqr{\vbox{%
  \phantomhrule\tableaustep

\hbox{\phantomvrule\tableaustep\kern\tableaustep\phantomvrule\tableaustep}%
  \hbox{\vbox{\phantomhrule\tableauside}\kern-\tableaurule}}}
\def\squares#1{\hbox{\count0=#1\noindent\loop\sqr
  \advance\count0 by-1 \ifnum\count0>0\repeat}}
\def\tableau#1{\vcenter{\offinterlineskip
  \tableaustep=\tableauside\advance\tableaustep by-\tableaurule
  \kern\normallineskip\hbox
    {\kern\normallineskip\vbox
      {\gettableau#1 0 }%
     \kern\normallineskip\kern\tableaurule}%
  \kern\normallineskip\kern\tableaurule}}
\def\gettableau#1 {\ifnum#1=0\let\next=\null\else
  \squares{#1}\let\next=\gettableau\fi\next}
\def\BZ{\mathbb Z}
\def\a{\alpha}
\def\l{\lambda}
\def\d{\delta}
\def\b{\beta}
\def\th{\theta}
\def\om{\omega}
\def\fg{\mathfrak{g}}
\def\fsl{\mathfrak{sl}}
\begin{document}


\title[The $\mathrm{SL}_3$ colored Jones polynomial of the trefoil]{
The $\mathrm{SL}_3$ colored Jones polynomial of the trefoil}
\author{Stavros Garoufalidis}
\address{School of Mathematics \\
         Georgia Institute of Technology \\
         Atlanta, GA 30332-0160, USA \newline 
         {\tt \url{http://www.math.gatech.edu/~stavros}}}
\email{stavros@math.gatech.edu}
\author{Hugh Morton}
\address{Department of Mathematics \\
        University of Liverpool \\
        Liverpool L69 3BX \\ ENGLAND \newline
        {\tt \url{http://www.liv.ac.uk/~su14}}}
\email{morton@liverpool.ac.uk}
\author{Thao Vuong}
\address{School of Mathematics \\
         Georgia Institute of Technology \\
         Atlanta, GA 30332-0160, USA \newline
         {\tt \url{http://www.math.gatech.edu/~tvuong}}}
\email{tvuong@math.gatech.edu}
\thanks{S.G. was supported in part by NSF. \\
\newline
1991 {\em Mathematics Classification.} Primary 57N10. Secondary 57M25.
\newline
{\em Key words and phrases: colored Jones polynomial, knots,
trefoil, torus knots, plethysm, rank 2 Lie algebras, Degree Conjecture, 
Witten-Reshetikhin-Turaev invariants.
}
}

\date{September 26, 2011}


\begin{abstract}
Rosso and Jones gave a formula for the colored Jones polynomial of a 
torus knot, colored by an irreducible representation of a simple Lie algebra.
The Rosso-Jones formula involves a plethysm function, unknown in general.
We provide an explicit formula for the second plethysm of 
an arbitrary representation of $\fsl_3$, which allows us to
give an explicit formula for the colored Jones polynomial of the trefoil,
and more generally, for $T(2,n)$ torus knots. We give two independent
proofs of our plethysm formula, one of which uses the work of 
Carini-Remmel. Our formula for the $\fsl_3$ colored Jones polynomial
of $T(2,n)$ torus knots allows us to verify the Degree Conjecture for
those knots, to efficiently
the $\fsl_3$ Witten-Reshetikhin-Turaev invariants of the Poincare sphere,
and to guess a Groebner basis for recursion ideal of
the $\fsl_3$ colored Jones polynomial of the trefoil.
\end{abstract}

\maketitle

\tableofcontents

\section{Introduction}
\lbl{sec.intro}
The initial goal of this paper was to provide a supply of explicit quantum 
invariants so as to help in formulating and testing a number of conjectures.
The most readily approachable knots in this context are the $(m,n)$ torus 
knots, particularly when $m=2$.  The aim was to give explicit details for 
the $\fsl_3$ invariants, as these are potentially the simplest case after 
the more readily available colored Jones ($\fsl_2$) invariants.

There is a general method of Rosso and Jones to determine any quantum 
invariant of a torus knot. For the invariant of the $(m,n)$ torus knot 
with quantum group module $V$ their calculations require knowledge of the 
decomposition of the module $\psi_m(V)$ into irreducible representations. 
This is a combinatorial problem depending on the quantum group and the 
choice of $V$, which does not always have a readily available explicit 
formula.

We give here an explicit formula where $m=2$ and $V$ is a general 
irreducible $\fsl_3$ module; from this we are able to give a detailed 
estimate for the extreme degrees of the resulting Laurent polynomial invariant.

Subsequently the second author reformulated some combinatorial work of 
Carini and Remmel \cite{CR} describing $\psi_2(V)$ for the irreducible 
$\fsl_N$ modules which correspond to partitions with $2$ parts.  This 
recovers the explicit formulae for $\fsl_3$, and also allows us to 
extend them to $\fsl_N$.

\section{The colored $\fsl_3$ Jones polynomial of the trefoil}
\lbl{sub.cj}

In his seminal paper \cite{Jo}, Jones introduced the Jones polynomial
of a knot $K$ in 3-space. The Jones polynomial is a Laurent polynomial in a
variable $q$
with integer coefficients, which can be generalized to an invariant
$J_{K,V}(q) \in \BZ[q^{\pm 1}]$ of a (0-framed) knot $K$ colored by a 
representation $V$ of a simple Lie algebra $\mathfrak{g}$, and normalized
to be $1$ at the unknot. The definition of $J_{K,V}(q)$ uses the 
machinery of {\em quantum groups} and may be found in \cite{Tu1,Tu2}
and also in \cite{Ja}.

Concrete formulas for the colored Jones polynomial $J_{K,V}(q)$ are hard to 
find in the case of higher rank Lie algebras, and for good reasons. 
For torus knots $T$, Jones and Rosso gave a formula for $J_{T,V}(q)$ which 
involves a plethysm map of $V$, unknown in general. Our goal is to give an 
explicit formula for the second plethysm of representations of $\fsl_3$ and 
consequently to give a formula for the $\fsl_3$
colored Jones polynomial of the trefoil.
To state our results, let $V_{n_1,n_2}$ denote the irreducible representation
of $\fsl_3$ with highest weight 
\begin{equation}
\lbl{eq.m1m2}
\l=n_1 \om_1 + n_2 \om_2
\end{equation}
where $n_1, n_2$ are non-negative integers and
$\om_1,\om_2$ are the fundamental weights of $\fsl_3$ dual to the 
simple roots $\a_1,\a_2$. In coordinates, we have
$$
\a_1=(1,-1,0), \qquad \a_2=(0, 1,-1), \qquad \om_1=\frac{1}{3}(2\a_1+\a_2),
\qquad \om_2=\frac{1}{3}(\a_1+2\a_2)
$$
The {\em quantum integer} $[n]$, 
the {\em quantum dimension} $d_{n_1,n_2}$ and the 
{\em twist parameter} $\th_{n_1,n_2}$ of $V_{n_1,n_2}$ are defined by
\begin{eqnarray}
\lbl{eq.n}
[n]&=&\frac{q^{\frac{n}{2}}-q^{-\frac{n}{2}}}{q^{\frac{1}{2}}-q^{-\frac{1}{2}}}
\\
\lbl{eq.dim}
d_{n_1,n_2}&=&\frac{[n_1+1][n_2+1][n_1+n_2+2]}{[2]}
\\
\lbl{eq.theta}
\theta_{n_1,n_2}&=&q^{\frac{1}{3}(n_1^2+n_1n_2+n_2^2)+n_1+n_2}
\end{eqnarray}
Let $T(m,n)$ denote the {\em torus knot} associated to a pair of coprime 
natural numbers $m,n$, and let $J_{T(m,n),n_1,n_2}(q)$ denote the $\fsl_3$
colored Jones polynomial of the torus knot $T(m,n)$ colored by 
$V_{n_1,n_2}$.

\begin{theorem}
\lbl{thm.1}
For all odd natural numbers $n$ we have
\begin{eqnarray*}
J_{T(2,n),n_1,n_2}(q) &=& \frac{\theta_{n_1,n_2}^{-2n}}{d_{n_1,n_2}}
\left(
\sum_{l=0}^{\min\{n_1,n_2\}}\sum_{k=0}^{n_1-l}(-1)^k d_{2n_1-2k-2l,2n_2+k-2l}
\theta_{2n_1-2k-2l,2n_2+k-2l}^{\frac{n}{2}} \right. \\ 
&   & \qquad\qquad +  \sum_{l=0}^{\min\{n_1,n_2\}}
\sum_{k=0}^{n_2-l}(-1)^k d_{2n_1+k-2l,2n_2-2k-2l}
\theta_{2n_1+k-2l,2n_2-2k-2l}^{\frac{n}{2}} \\
&   & \qquad\qquad - \left. \sum_{l=0}^{\min\{n_1,n_2\}}  d_{2n_1-2l,2n_2-2l}
\theta_{2n_1-2l,2n_2-2l}^{\frac{n}{2}} \right).
\end{eqnarray*}
\end{theorem}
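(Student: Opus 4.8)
The plan is to start from the Rosso--Jones cabling formula, which expresses the colored Jones polynomial of a torus knot $T(a,b)$ in terms of the $a$-th Adams operation applied to the coloring representation. Specializing to $a=2$, it reads
\[
J_{T(2,b),m_1,m_2}(q) \;=\; \frac{\theta_{m_1,m_2}^{-2b}}{d_{m_1,m_2}} \sum_{\mu} n_\mu\, d_\mu\, \theta_\mu^{b/2},
\]
where the sum runs over dominant weights $\mu$, the integers $n_\mu$ are the multiplicities in the decomposition of the virtual representation $\psi^2(V_{m_1,m_2}) = S^2 V_{m_1,m_2} - \Lambda^2 V_{m_1,m_2}$ into irreducibles, and $d_\mu,\theta_\mu$ are the quantum dimension and twist of $V_\mu$. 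The framing factor $\theta_{m_1,m_2}^{-2b}$ accounts for the self-linking $ab=2b$ of $T(2,b)$, and the exponent $b/2=b/a$ on each twist is the standard cabling correction. Thus the entire theorem reduces to computing the \emph{second plethysm} $\psi^2(V_{m_1,m_2})$ explicitly; this is the heart of the argument. (As a sanity check, for the standard representation $V_{1,0}$ one has $S^2\mathbb{C}^3=V_{2,0}$ and $\Lambda^2\mathbb{C}^3=V_{0,1}$, so $\psi^2(V_{1,0})=V_{2,0}-V_{0,1}$, which is exactly the $m_1=1,m_2=0$ specialization of the three sums.)

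To carry out the plethysm I would use the Weyl character formula together with the Racah--Speiser (Brauer--Klimyk) resolution of a Weyl-invariant weight sum into irreducible characters. Since $\psi^2$ doubles weights, the character of $\psi^2(V_{m_1,m_2})$ is $\sum_\nu m_\nu\, e^{2\nu}$, where $m_\nu$ is the weight multiplicity in $V_{m_1,m_2}$, and one extracts irreducible multiplicities by antisymmetrizing with the $\rho$-shift:
\[
n_\mu \;=\; \sum_{w \in W} (-1)^{\ell(w)}\, m^{(2)}_{w(\mu+\rho)-\rho},
\]
where $m^{(2)}_\xi = m_{\xi/2}$ when $\xi$ lies in twice the weight lattice and $0$ otherwise, and $W$ is the Weyl group of $\fsl_3$, of order $6$. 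For $\fsl_3$ the weight diagram of $V_{m_1,m_2}$ is an explicit hexagon with known (shell-by-shell) multiplicities, so this is a finite, if intricate, computation. The alternating signs $(-1)^{\ell(w)}$ are the source of the $(-1)^k$ weights in the statement, and the two directions in which the components propagate --- toward $V_{2m_1-2k-2l,\,2m_2+k-2l}$ and toward $V_{2m_1+k-2l,\,2m_2-2k-2l}$ --- reflect the two fundamental-weight directions of the weight lattice. The index $l$, bounded by $\min\{m_1,m_2\}$, records the inner shells of $V_{m_1,m_2}$ whose weight multiplicity exceeds one; each shell contributes a fan of components based at $V_{2m_1-2l,\,2m_2-2l}$.

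The final step is bookkeeping: substituting the resulting list of highest weights and signed multiplicities into the Rosso--Jones sum produces the two double sums directly, while the subtracted single sum over $V_{2m_1-2l,\,2m_2-2l}$ is an inclusion--exclusion correction for the diagonal ($k=0$) terms. Indeed, setting $k=0$ in either double sum gives $V_{2m_1-2l,\,2m_2-2l}$ with sign $+1$, so these are counted twice; subtracting the third sum once restores their correct net multiplicity $+1$, consistent with the diagonal components lying in the symmetric square.

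The hard part will be the plethysm decomposition itself: correctly tracking, across the six chamber reflections and the $\rho$-shift, which doubled weights land strictly inside the dominant chamber (contributing to the two arms), which fall on a wall and cancel, and which require the diagonal correction. Organizing this cancellation so that precisely the three clean sums survive --- with the exact ranges $0\le k\le m_1-l$ and $0\le k\le m_2-l$ --- is the main obstacle. Once the second plethysm is established, the passage to $J_{T(2,b),m_1,m_2}(q)$ is a routine substitution into the Rosso--Jones formula.
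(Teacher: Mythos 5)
Your reduction is exactly the paper's: Theorem \ref{thm.1} is obtained by specializing the Jones--Rosso formula \eqref{eq.jr} to $a=2$ and substituting an explicit decomposition of the second Adams operation $\psi_2(V_{m_1,m_2})$ into irreducibles; your translation of weights (e.g.\ $2\l-k\a_1-2l(\a_1+\a_2)$ corresponding to the index pair $(2m_1-2k-2l,\,2m_2+k-2l)$) and your $V_{1,0}$ sanity check are both correct. The gap is that you never actually establish the plethysm decomposition, which is the entire mathematical content of the theorem --- the paper isolates it as Theorem \ref{thm.2} and devotes all of Section \ref{sec.thm2} to proving it. You correctly identify it as ``the main obstacle'' and describe a strategy (antisymmetrize the doubled weight multiplicities of $V_{m_1,m_2}$ over the Weyl group with the $\rho$-shift), but you do not carry out the cancellation analysis that would produce the precise ranges $0\le k\le m_1-l$, $0\le k\le m_2-l$, $0\le l\le\min\{m_1,m_2\}$ and the subtracted diagonal sum. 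As it stands the argument is a plan for a proof, not a proof.

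It is worth noting that your proposed route to the plethysm is genuinely different from the paper's. The paper works with Schur functions (the character of $V_{m_1,m_2}$ is $s_{m_1+m_2,m_2}$) and proves the identity by induction on $m_2$: the base cases $m_2=0,1$ come from the known expansion $\psi_2(s_m)=\sum_k(-1)^ks_{2m-k,k}$, and the inductive step uses the recursion $\psi_2(s_{m_1,m_2+1})=\psi_2(s_{m_1,m_2})(s_2-s_{1,1})-\psi_2(s_{m_1+1,m_2})-\psi_2(s_{m_1-1,m_2-1})$ of Lemma \ref{lem.psi2} together with the $\fsl_3$ Littlewood--Richardson products of Lemma \ref{lem.LR} and Corollary \ref{cor.LR}; this converts the problem into a long but mechanical telescoping of signed Schur sums. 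Your Racah--Speiser approach would instead require a direct, global analysis of which $\rho$-shifted Weyl images of doubled weights survive. That is feasible for $\fsl_3$, where the weight multiplicities follow the hexagonal shell structure, and if completed it would arguably be more readily generalizable to $\psi_a$; but you would still have to supply that case analysis in full before Theorem \ref{thm.1} is proved.
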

Theorem \ref{thm.1} can be used to answer for several problems. 
\begin{itemize}
\item
We can verify the $\fsl_3$-Degree Conjecture of the colored Jones polynomial
for the trefoil; see \cite{GV}. Explicitly, we can compute the lowest
degree $\d^*_{T(2,n),n_1,n_2}$ and the highest degree $\d_{T(2,n),n_1,n_2}$ of
the Laurent polynomial $J_{T(2,n),n_1,n_2}(q)$ as follows
\begin{equation}
\lbl{eq.dstar}
\d^*_{T(2,n),n_1,n_2}=
\begin{cases}
-\frac{n}{2}n_1^2-\frac{n}{2}n_2^2-nn_1n_2-\frac{3n}{2}n_1
-(\frac{5n}{2}-2)n_2 &\text{if}~n_1\geq n_2\\
-\frac{n}{2}n_1^2-\frac{n}{2}n_2^2-nn_1n_2-\frac{3n}{2}n_2
-(\frac{5n}{2}-2)n_1 &\text{if}~n_1 < n_2
\end{cases}
\end{equation}
\begin{equation}
\lbl{eq.d}
\d_{T(2,n),n_1,n_2}=-(n-1)(n_1+n_2)
\end{equation}
The above formula verifies that the degree, restricted to each Kostant chamber,
is a quadratic quasi-polynomial.
\item
We can efficiently compute the Witten-Reshetikhin-Turaev invariant of the
Poincare sphere, complementing calculations of Lawrence \cite{La}.
\item
We can guess an explicit Groebner basis for the ideal of recursion relations 
of the 2-variable $q$-holonomic sequence $J_{T(2,3),n_1,n_2}(q)$; 
see \cite{GK}.
\end{itemize}

\begin{remark}
\lbl{rem.thm1}
An alternative formula for the $\fsl_3$ colored Jones polynomial of $T(2,3)$
is given by Lawrence in \cite{La}. Lawrence's formula is derived from
the theory of Quantum Groups, and cannot generalize to the case of $T(2,n)$
torus knots. In contrast, the plethysm formula of Theorem
\ref{thm.2} below can be generalized to a formula for $\psi_m(V_{\l})$
which allows for an efficient formula of the $\fsl_3$ colored Jones polynomial
of all torus knots. Additional generalizations are possible for all 
simple Lie algebras; see \cite{GV}.
\end{remark}

\begin{remark}
\lbl{rem.katlas}
Theorem \ref{thm.1} gives an efficient computation of the $\fsl_3$ colored 
Jones polynomial of the $3_1,5_1,7_1$ and $9_1$ knots in the Rolfsen notation.
In low weights, our answer agrees with the independent computation 
given by the entirely different methods of the {\tt KnotAtlas}; see
\cite{B-N}. This is a consistency check which simultaneously validates
the formulas of Theorem \ref{thm.1} and the data of the {\tt KnotAtlas}. 
\end{remark}

\subsection{An $\fsl_3$ plethysm formula}
\lbl{sub.plethysm}

As mentioned above, Theorem \ref{thm.1} follows from the Rosso-Jones
formula for the colored Jones polynomial of torus knots and the
following plethysm computation. Let $\psi_m$ denote the $m$-plethysm
operation.

\begin{theorem}
\lbl{thm.2}
For $\l$ as in Equation \eqref{eq.m1m2} we have
\begin{eqnarray*}
\psi_2(V_{\l})&=&
\sum_{l=0}^{\min\{n_1,n_2\}}\sum_{k=0}^{n_1-l}(-1)^k V_{2 \l-k \a_1-2 l(\a_1+\a_2)} \\
& & + 
\sum_{l=0}^{\min\{n_1,n_2\}}\sum_{k=0}^{n_2-l}(-1)^k 
V_{2 \l-k \a_2-2 l(\a_1+\a_2)} \\
& & - 
\sum_{l=0}^{\min\{n_1,n_2\}} V_{2 \l-2 l(\a_1+\a_2)}
\end{eqnarray*}
\end{theorem}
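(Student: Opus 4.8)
The plan is to pass from representations to characters and reduce Theorem \ref{thm.2} to a single identity of Weyl characters that is then evaluated by a telescoping cancellation. Recall that the Rosso--Jones plethysm $\psi_2$ is the second Adams operation on the representation ring of $\fsl_3$; equivalently $\psi_2(V)=S^2V-\Lambda^2 V$, and on formal characters it is characterized by $\mathrm{ch}(\psi_2 V)(H)=\mathrm{ch}(V)(2H)$. Writing $\chi_{\l}$ for the character of $V_{\l}$, $\rho=\om_1+\om_2$, $W$ for the Weyl group, and $\mathcal{A}(\nu)=\sum_{w\in W}(-1)^{\ell(w)}e^{w\nu}$ for the antisymmetrized exponential sum, the Weyl character formula reads $\chi_{\l}=\mathcal{A}(\l+\rho)/\mathcal{A}(\rho)$. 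Since $\mathcal{A}(\nu)(2H)=\mathcal{A}(2\nu)(H)$, we get
\[
\psi_2(\chi_{\l})=\frac{\mathcal{A}(2\l+2\rho)}{\mathcal{A}(2\rho)} .
\]
Reading the signs $c_{\mu}\in\{\pm1\}$ off the three displayed families, the theorem is equivalent to $\mathcal{A}(2\l+2\rho)=\mathcal{A}(2\rho)\sum_{\mu}c_{\mu}\chi_{\mu}$.

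Next I would use the denominator factorization. By the Weyl denominator formula $\mathcal{A}(\rho)=\prod_{\a>0}(e^{\a/2}-e^{-\a/2})$, so $\mathcal{A}(2\rho)=\mathcal{A}(\rho)\cdot P$ with $P=\prod_{\a>0}(e^{\a/2}+e^{-\a/2})$. The factor $P$ is $W$-invariant, and expanding over the three positive roots $\a_1,\a_2,\a_1+\a_2$ identifies it with the character of the adjoint representation, $P=\chi_{V_{1,1}}=2+\sum_{\beta\ \mathrm{root}}e^{\beta}$. Dividing through by $\mathcal{A}(\rho)$, the target becomes the single identity $\mathcal{A}(2\l+2\rho)=P\cdot\sum_{\mu}c_{\mu}\,\mathcal{A}(\mu+\rho)$. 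The tool for the right-hand side is the elementary multiplication rule $\mathcal{A}(\nu)\cdot\chi_{V}=\sum_{\beta}\dim(V_{\beta})\,\mathcal{A}(\nu+\beta)$ (valid since $\chi_V$ is $W$-invariant), which applied to $V=V_{1,1}$ replaces each $\mathcal{A}(\mu+\rho)$ by $2\mathcal{A}(\mu+\rho)+\sum_{\beta\in\{\pm\a_1,\pm\a_2,\pm(\a_1+\a_2)\}}\mathcal{A}(\mu+\rho+\beta)$.

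The heart of the argument, and the step I expect to be the main obstacle, is organizing the resulting cancellation. Fix $l$ and consider the inner alternating sum $\sum_{k=0}^{m_1-l}(-1)^k\mathcal{A}(\mu_k+\rho)$ of the first family, where $\mu_k=2\l-k\a_1-2l(\a_1+\a_2)$. Because $\mu_k\pm\a_1=\mu_{k\mp1}$, the $\pm\a_1$ shifts turn this into a telescoping alternating sum whose interior terms annihilate against the weight-$0$ (multiplicity two) contribution, collapsing the inner sum to its two endpoints $k=0,\,m_1-l$ together with two ``overshoot'' terms $k=-1,\,m_1-l+1$. One then checks that the surviving boundary terms either lie on a Weyl wall, so that $\mathcal{A}$ vanishes, or reflect with a sign onto boundary terms produced by the $\a_2$- and $(\a_1+\a_2)$-shifts, by the second and third families, and by the neighbouring values of $l$. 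Expressing each weight as $a\om_1+b\om_2$ and tracking the coordinates makes this explicit; for instance $a=\langle 2\l-k\a_1-2l(\a_1+\a_2)+\rho,\a_1^{\vee}\rangle=2m_1-2k-2l+1$ is always odd, which keeps many terms off the relevant wall and pins down which reflections occur. After all interior and boundary cancellations, the unique surviving term is $\mathcal{A}(2\l+2\rho)$ with coefficient $+1$ (arising as the top shift $\beta=\a_1+\a_2$ of the $\mu=2\l$ character), which is exactly the desired identity; the bookkeeping of which boundary contributions cancel which, indexed along the $l$-slices and the three families, is where the real work lies.

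Finally I would record two independent consistency checks. Since $\psi_2$ is a ring homomorphism with $\psi_2(\chi_{1,0})=\chi_{2,0}-\chi_{0,1}$ and $\psi_2(\chi_{0,1})=\chi_{0,2}-\chi_{1,0}$ (from $p_2=h_2-e_2$ on $\mathbb{C}^3$), the formula can be verified recursively on $R(\fsl_3)=\mathbb{Z}[\chi_{1,0},\chi_{0,1}]$; in particular the case $\l=\om_1$ of Theorem \ref{thm.2} collapses to $V_{2,0}-V_{0,1}$, in agreement. Independently, in Schur-function language the statement is the expansion of $s_{\l}(x_1^2,x_2^2,x_3^2)=\det\!\big(x_i^{2(\l_j+3-j)}\big)/\big(\Delta(x)\prod_{i<j}(x_i+x_j)\big)$ into Schur polynomials $s_{\mu}(x_1,x_2,x_3)$, which offers a purely combinatorial route to the same coefficients $c_{\mu}$ and the same index ranges $0\le l\le\min\{m_1,m_2\}$, $0\le k\le m_1-l$.
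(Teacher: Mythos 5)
Your reduction to the identity $\mathcal{A}(2\l+2\rho)=P\cdot\sum_{\mu}c_{\mu}\,\mathcal{A}(\mu+\rho)$ with $P=\prod_{\a>0}(e^{\a/2}+e^{-\a/2})=\chi_{V_{1,1}}$ is correct, and it is a genuinely different, more Lie-theoretic route than the paper's: the paper works entirely with Schur functions, where the key lemma is the recursion $\psi_2(s_{m_1,m_2+1})=\psi_2(s_{m_1,m_2})(s_2-s_{1,1})-\psi_2(s_{m_1+1,m_2})-\psi_2(s_{m_1-1,m_2-1})$ (coming from $\psi_2(s_1)=s_2-s_{1,1}$ together with explicit $\fsl_3$ Pieri/Littlewood--Richardson rules), and the theorem is proved by induction on $m_2$ from the classical base case $\psi_2(s_m)=\sum_{k=0}^m(-1)^ks_{2m-k,k}$. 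Your consistency checks (the $\l=\om_1$ case, the $p_2=h_2-e_2$ recursion on $R(\fsl_3)$, the determinantal form of $s_{\l}(x^2)$) are all correct.

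However, the proof is not complete: the entire content of the theorem lives in the cancellation you defer, and as described it does not quite close. After multiplying each $\mathcal{A}(\mu_k+\rho)$ by $P$ via $\mathcal{A}(\nu)\chi_{V_{1,1}}=2\mathcal{A}(\nu)+\sum_{\beta}\mathcal{A}(\nu+\beta)$, the $\pm\a_1$ shifts do telescope against the multiplicity-two weight $0$ inside each inner sum of the first family, but the $\pm\a_2$ and $\pm(\a_1+\a_2)$ shifts applied to the \emph{interior} terms produce four further full-length alternating sums per slice; these are not ``boundary terms,'' and your outline does not say what they cancel against. For instance $\mathcal{A}(\mu_k^{(l)}+\rho-(\a_1+\a_2))=\mathcal{A}(\mu_k^{(l+1)}+\rho+(\a_1+\a_2))$ and both occurrences carry the same sign $(-1)^k$, so the naive pairing between adjacent $l$-slices adds rather than cancels; the true cancellation must route through the second and third families and through Weyl reflections onto walls, and nothing in the proposal pins this down. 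I checked the framework on $\l=\om_1$, where $P\bigl(\mathcal{A}(3\om_1+\om_2)-\mathcal{A}(\om_1+2\om_2)\bigr)=\mathcal{A}(4\om_1+2\om_2)$ does hold, so the approach is viable; but until the bookkeeping is actually executed --- or replaced by an induction such as the paper's, where each Littlewood--Richardson product is finite and verified case by case --- what you have is a plan rather than a proof.
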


\section{The Rosso-Jones formula}
\lbl{sec.jr}

The polynomial invariant $J_{K,V}(q)$ of a knot $K$ colored by the representation
$V$ of a simple Lie algebra is difficult to compute from its Quantum Group
definition 
even when $K=4_1$ and $\fg=\fsl_3$. Although it is a finite multi-dimensional
sum, a practical computation seems out of reach. Fortunately, there is
a class of knots whose quantum group invariant has a simple enough formula
that allows us to extract its $q$-degree. This is the class of 
{\em torus knots} $T(m,n)$ where $m,n$ are coprime natural numbers. The simple
formula is due to  Rosso and Jones, and also studied by the second named
author, \cite{JR,Mo}.
Let $d_{\l}$ denote the {\em quantum dimension} of the representation $V_{\l}$
and $\th_{\l}$ is the eigenvalue of the {\em twist} operator on  
the representation $V_{\l}$. $d_{\l}$ and $\th_{\l}$ are given by

\begin{eqnarray}
\lbl{eq.dl}
d_{\l}&=&\prod_{\a>0} \frac{[(\l+\rho,\a)]}{[(\rho,\a)]}
\\
\lbl{eq.thl}
\th_{\l}&=&q^{\frac{1}{2}(\l,\l+2\rho)}
\end{eqnarray}
where $\a$ belongs to the set of positive roots,  
$\rho=\frac{1}{2} \sum_{\a>0} \a$
is half the sum of positive roots and $(\cdot,\cdot)$ denotes the 
$\mathfrak{g}$ invariant inner product on the dual of the Cartan algebra 
(normalized so that the longest root has length $\sqrt{2}$). 
When $\mathfrak{g}=\fsl_3$ and 
$\l$ is given by \eqref{eq.m1m2}, then the quantum dimension and the
twist parameter coincide with \eqref{eq.dim} and \eqref{eq.theta}.
For a natural number $m$, consider the $m$-{\em Adams operation} $\psi_m$
on representations. It is given by (see \cite{FH,Mc})
\begin{equation}
\lbl{eq.Sla}
\psi_m(V_{\l})=\sum_{\mu \in S_{\l,m}} c^{\mu}_{\l,m} V_{\mu}
\end{equation}
where $c^{\mu}_{\l,m}$ are non-zero integers.  
The Rosso-Jones formula is the following (see \cite{JR}):

\begin{equation}
\lbl{eq.jr}
J_{T(m,n),\l}(q)=\frac{\th_{\l}^{-mn}}{d_{\l}}
\sum_{\mu \in S_{\l,m}} c^{\mu}_{\l,m} d_{\mu} \th_{\mu}^{\frac{n}{m}}
\end{equation}
For related discussion, see also \cite{MM}.

\section{Schur functions in $\fsl_3$}
\lbl{sec.sl3}

\subsection{A review of Schur functions}
\lbl{sub.sreview}

Let us recall some well-known properties of Schur functions and their
relation to the character of irreducible representations of $\fsl_N$,
that can be found in \cite{Mc,FH}. For a partition $\l$ with parts 
$\l_1\ge\l_2\ge \ldots\ge\l_k\ge0$, let $s_{\l_1,\ldots,\l_k}(x_1,\ldots,x_N)$
denote the corresponding Schur function. A partition $\l=(\l_1,\,\dots,\l_k)$
will be depicted as an arrangement of boxes as follows (for $\l=(4,2,1)$):
$$
\psdraw{421}{0.8in}
$$ 
If $\om_i$ denote the fundamental weights of $\fsl_N$ and $n_i$ are 
nonnegative integers for $i=1,\dots,N-1$, and 
$\l=(\sum_{i=1}^{N-1} n_i, \sum_{i=2}^{N-1} n_i, \dots, \sum_{i=N-1}^{N-1} n_i)$
then 
\begin{equation}
\lbl{eq.sVL}
\text{character}(V_{\sum_{i=1}^{N-1} n_i \om_i})=
s_{\l}(x_1,\ldots,x_N)
\end{equation}
For $\l=(4,2,1)$ we then have $(n_1,n_2,n_3)=(2,1,1)$.

The plethysm operation $\psi_m$ is defined by 
$$
\psi_m(s_\l(x_1,\ldots,x_N))=s_\l(x_1^m,\ldots,x_N^m)
$$
Note that $s_{1}=x_1+\cdots+x_N$ and $\psi_2(s_{1})=s_{2}-s_{1,1}$.

In $\fsl_N$ the irreducible modules correspond to partitions $\l$ with at 
most $N$ parts. 
The decomposition of $\psi_m(V_\l)$ into irreducibles needed for the 
invariant of the $(m,n)$ torus knot is given by the corresponding expansion 
of the symmetric function $\psi_m(s_\l)$ as a linear combination of Schur 
functions.

When $N=3$ the Schur function $s_\l$ vanishes where $\l$ has more than $3$ 
parts, and satisfies $s_{a,b,c}=s_{a+1,b+1,c+1}$. Then $s_{a,b,c}=s_{a-c,b-c}$,  
so we need only 
consider partitions with at most $2$ parts.  All the same, it will be 
convenient to use $3$ parts in what follows.

\subsection{A reformulation of Theorem \ref{thm.2}}

The goal of this section is to give a formula for $\psi_2(s_{m_1,m_2})$ as a 
linear combination of Schur functions, assuming that $N=3$. 

\begin{definition}
\lbl{def.Dm1m2}
For $m_1\ge m_2\ge0$, let  $D(m_1,m_2)\subset {\mathbb{N}}^3$ denote the
set of tuples $(a,b,c)$ that satisfy
\begin{itemize}
\item
$a+b+c=2m_1+2m_2,\  2m_1\ge a\ge b\ge c\ge 0,\  a\ge 2m_2\ge c$
\item
if $b \geq 2 m_2$ then $c\equiv 0 \bmod 2$
\item
if $b \leq 2 m_2$ then $a\equiv 0 \bmod 2$
\end{itemize}
\end{definition}

\begin{theorem}
\lbl{psi2}
In $\fsl_3$ for all $m_1 > m_2$ we have:
$$
\psi_2(s_{m_1,m_2})=\sum_{(a,b,c)\in D(m_1,m_2)} (-1)^b s_{a,b,c}
$$
\end{theorem}

It is interesting to note that the coefficient of every Schur function in 
the expansion of $\psi_2(s_{m_1,m_2})$ is $0,\pm1$. The same feature proves to 
be the case for $\psi_2(s_{m_1,m_2})$ in the general case of $\fsl_N$, noted in 
Subsection \ref{sub.thm5}.

\subsection{Theorem \ref{psi2} implies Theorem \ref{thm.2}}

Since $V_{n_1 \om_1 + n_2 \om_2}^*=V_{n_2 \om_1 + n_1 \om_2}$, and $J_{K,V^*}(q)=J_{K,V}(1/q)$,
it suffices to prove Theorem \ref{thm.2} when $n_1 > n_2$.
Equation \eqref{eq.sVL} for $N=3$ implies that
$$
\text{character}(V_{n_1 \om_1 + n_2 \om_2})=s_{n_1+n_2,n_2}(x_1,x_2,x_3)
$$
Fix nonnegative integers $n_1$ and $n_2$ and set
$(m_1,m_2)=(n_1+n_2,n_2)$ in Theorem \ref{psi2}.

We can parametrise a tuple $(a,b,c) \in D(m_1,m_2)$ that satisfies
$b\ge 2m_2$ by setting 
$b=2m_2+k,~ c=2l$, to get $a=2m_1-k-2l$, satisfying the inequalities 
$k,l\ge 0$, $k\le m_1-m_2-l$, $l\le m_2, m_1-m_2$. Likewise,
we can parametrize a tuple $(a,b,c) \in D(m_1,m_2)$ that satisfies
$b\le 2m_2$ by setting $b=2m_2-k, a=2m_1-2l$ to get $c=2l+k$, satisfying 
$k,l\ge 0$, $k\le m_2-l$, $l\le m_2, m_1-m_2$. Thus Theorem \ref{psi2}
implies the formula of Theorem \ref{thm.2}.

\subsection{A reformulation of Theorem \ref{psi2}}

To establish Theorem \ref{psi2} we first prove Theorem \ref{psi2s}.

\begin{theorem}
\lbl{psi2s}
For $m_1>m_2$ we have
\begin{eqnarray*}
\left(\sum_{(a,b,c)\in\substack { D(m_1,m_2)}} (-1)^b s_{a,b,c}\right) 
\psi_2( s_{1})&=& \sum_{(a',b',c')\in\substack { D(m_1+1,m_2)}} (-1)^{b'} s_{a',b',c'} 
\\ & &+ \sum_{(a',b',c')\in\substack{ D(m_1,m_2+1)}} (-1)^{b'} s_{a',b',c'}\\
& & + \sum_{\substack{(a',b',c')\in D(m_1-1,m_2-1) \\ m_2>0}} (-1)^{b' }s_{a',b',c'} .
\end{eqnarray*}
\end{theorem}
 
In the proof of Theorem \ref{psi2} we will need the following special 
cases of the {\em Littlewood-Richardson rule}
adapted to $\fsl_3$, bearing in mind that Schur functions for partitions 
with more than $3$ parts are $0$ in this case; see \cite{Mc}. In the next
lemma and below, we will use the convention that $s_{a_1,a_2,a_3}=0$ unless
$a_1 \geq a_2 \geq a_3$. Furthermore, the notation $s_{a,b,c}|_{a>b}$ (resp.
$s_{a,b,c}|_{a=b}$) means $s_{a,b,c}$ when $a>b$ (resp. $a=b$)
and zero otherwise.

\begin{lemma}
\lbl{lem.LR}
In $\fsl_3$ we have
\begin{eqnarray*}
s_{a,b,c}s_2 &=&  s_{a+2,b,c}+s_{a,b+2,c} +s_{a,b,c+2} +s_{a+1,b+1,c}|_{a>b}
+s_{a+1,b,c+1} +s_{a,b+1,c+1}|_{b>c} 
\\
s_{a,b,c}s_{1,1} &=&  s_{a+1,b+1,c}+s_{a+1,b,c+1} +s_{a,b+1,c+1}
\\
s_{m_1,m_2}s_1 &=& s_{m_1+1,m_2}+s_{m_1,m_2+1} +s_{m_1,m_2,1}
\\
\end{eqnarray*}
\end{lemma}
%

\begin{corollary}
\lbl{cor.LR}
For $a\ge b\ge c\ge0$ we have 
\begin{eqnarray*}
s_{a,b,c}(s_{2}-s_{ 1,1})& =& 
s_{ a+2,b,c}+s_{ a,b+2,c} +s_{ a,b,c+2} -s_{ a+1,b+1,c}|_{a=b} -s_{ a,b+1,c+1}|_{b=c}
\end{eqnarray*}
\end{corollary}
        
\begin{corollary}
\lbl{lem.psi2}
 
Since $\psi_2$ is a ring homomorphism, and $\psi_2(s_1)=s_2-s_{1,1}$, we have 
\begin{eqnarray*}
\psi_2(s_{m_1,m_2})(s_2-s_{1,1}) &=& 
\psi_2(s_{m_1,m_2})\psi_2(s_1) =\psi_2(s_{m_1,m_2}s_1) \\
& =&\left\{\begin{array}{lc}
\psi_2(s_{m_1+1,m_2})+\psi_2(s_{m_1,m_2+1}) +\psi_2(s_{m_1,m_2,1}) 
&\mbox{ if }m_1>m_2>0,\\
\psi_2(s_{m_1+1,m_2})+\psi_2(s_{m_1,m_2+1}) &\mbox{ if }m_1>m_2=0.
\end{array}
\right.
 \end{eqnarray*}
\end{corollary}

\subsection{Theorem \ref{psi2s} implies Theorem \ref{psi2}}

We deduce Theorem \ref{psi2} from Theorem \ref{psi2s} by induction on $m_2$. 

When $m_2=0$ we have $(a,b,c)\in D(m_1,0) ~\text{iff}~ c=0,
~ a+b=2m_1,~ a\ge b\ge0$. It is known   (for example, \cite[Eqn.2.30]{CGR}) 
that
$$
\psi_2(s_{m})=\sum_{k=0}^m (-1)^ks_{2m-k,k}.
$$ 
This establishes Theorem \ref{psi2} for $m_2=0$. 

Theorem \ref{psi2s}  gives
$$
\psi_2(s_{m_1,m_2}) \psi_2( s_{1})=\psi_2(s_{m_1+1,m_2}) 
+ \sum_{(a',b',c')\in\substack D(m_1,m_2+1)} (-1)^{b'} s_{a',b',c'}+\psi_2(s_{m_1-1,m_2-1})
$$ 
by induction on $m_2$

Corollary \ref{lem.psi2} then shows that
$$
\psi_2(s_{m_1,m_2+1})=\sum_{(a',b',c')\in D(m_1,m_2+1)} (-1)^{b'} s_{a',b',c'},
$$ 
which completes the induction step.

\subsection{Proof of Theorem \ref{psi2s}}
To prove theorem \ref{psi2s} we sum both sides of the equation in 
Corollary \ref{cor.LR} over $(a,b,c)\in D(m_1,m_2)$, using the following lemma.

\begin{lemma}
\lbl{detail} 
Suppose that $m_1>m_2\ge 0$. 
Then
{\small
\begin{eqnarray}
\lbl{eq.d1}
\sum_{(a,b,c) \in D(m_1,m_2)}  (-1)^b s_{a+2,b,c} 
&=& \sum_{\substack{(a',b',c') \in D(m_1+1,m_2) \\ a'\ne b', a'\ne 2m_2}} (-1)^{b'}s_{a',b',c'}\\[3mm]
\lbl{eq.d2}
\sum_{(a,b,c) \in D(m_1,m_2)} (-1)^b s_{a,b+2,c}
&=& \sum_{\substack{(a',b',c') \in D(m_1,m_2+1) \\ b'\ne c', c'\ne 2m_2+2}}(-1)^{b'} s_{a',b',c'}
 + \sum_{\substack{(a',b',c') \in D(m_1+1,m_2) \\ a'=2m_2, b'\ne c'}}(-1)^{b'} s_{a',b',c'}\\[3mm]
\lbl{eq.d3}
\sum_{(a,b,c) \in D(m_1,m_2)} (-1)^b s_{a,b,c+2}
&=& \sum_{\substack{(a',b',c') \in D(m_1-1,m_2-1) \\ m_2>0}}(-1)^{b'} s_{a',b',c'}  
+ \sum_{\substack{(a',b',c') \in D(m_1,m_2+1) \\ c'=2m_2+2}} (-1)^{b'} s_{a',b',c'}\\[3mm]
\lbl{eq.d4}
\sum_{\substack{(a,b,c) \in D(m_1,m_2)\\ a = b}} (-1)^{b+1} s_{a+1,b+1,c}
&=& \sum_{\substack{(a',b',c') \in D(m_1+1,m_2) \\ a'= b', a'\ne 2m_2, b' \ne c'}}(-1)^{b'} s_{a',b',c'} \\[3mm]
\lbl{eq.d5}
\sum_{\substack{(a,b,c) \in D(m_1,m_2) \\b = c}} (-1)^{b+1} s_{a,b+1,c+1}
&=& \sum_{\substack{(a',b',c') \in D(m_1,m_2+1) \\ b'= c', c'\ne 2m_2+2}}(-1)^{b'} s_{a',b',c'}
+ \sum_{\substack{(a',b',c') \in D(m_1+1,m_2) \\ a'=2m_2, b'=c'}} (-1)^{b'} s_{a',b',c'}
\end{eqnarray}
}
\end{lemma}
 
The total sum of the left hand sides of the equations in Lemma \ref{detail} 
is then the left hand side of the equation in theorem \ref{psi2s}, while 
the terms on the right hand sides make up the right hand side of Theorem 
\ref{psi2s}.

\subsection{Proof of Lemma \ref{detail}}
For each of the five equations we provide a bijective transformation 
carrying $(a,b,c)\in D(m_1,m_2)$ with the restrictions shown to $(a',b',c')$ 
satisfying the conditions on the right hand sides. 
 
We  make repeated use of  the parity rules to ensure that inequalities 
force a difference of at least $2$.  With the exception of a couple of 
less obvious cases we omit proofs that the individual parity rules for 
$(a',b',c')$ are satisfied, as they generally follow readily from those 
for $(a,b,c)$ and vice versa. Equally the sum $a'+b'+c'$  is always 
obviously correct.

\begin{proof}
For Equation \eqref{eq.d1}, put $a'=a+2,b'=b,c'=c$.
Let $(a,b,c)\in D(m_1,m_2)$. Then $2m_2+2\ge a'> b'\ge c'\ge0$, 
and $a'>2m_2\ge c'$. Then $(a',b',c')\in D(m_1+1,m_2)$, with 
$a'\ne b'$ and $a' \ne 2m_2$.
 
Conversely suppose that $(a',b',c')\in D(m_1+1,m_2)$, with $a'> b'$ and 
$a' > 2m_2$. By the parity rules, if $b'\le 2m_2$ then $a'\equiv 0 \bmod 2$, 
so $a'\ge 2m_2+2\ge b'+2$.  If $b'>2m_2$ then $a'\equiv b' \bmod 2$, 
so $a'\ge b'+2>2m_2+2$. In any case $2m_1\ge a'-2\ge b'\ge c'\ge 0$, 
and $a'-2\ge 2m_2\ge c'$. Then $(a,b,c)\in D(m_1,m_2)$. This proves Equation
\eqref{eq.d1}.\\[0.5mm]

 For Equation \eqref{eq.d2}, put $a'=a,b'=b+2,c'=c$. Let 
$(a,b,c)\in D(m_1,m_2)$ with $a\ge b+2$. If $a=2m_2$ then 
$2m_1+2\ge a'\ge b'> c'\ge 0$ and $a'\ge 2m_2\ge c'$, Then 
$(a',b',c')\in D(m_1+1,m_2)$, with  $a' =2m_2, b'>c'$. Otherwise $a>2m_2$. 
If $b\ge 2m_2$ then  $a\ge b+2\ge 2m_2+2$, while if $b<2m_2$ then 
$a\equiv 0 \bmod 2$ by the parity rules, so that $a\ge 2m_2+2$. Hence 
$2m_1\ge a'\ge b'> c'\ge 0$ and $a'\ge 2m_2+2\ge c'$. In this case we 
check the parity rules explicitly. Here $b'\ge 2m_2+2\implies b\ge 
2m_2\implies c'\equiv c\equiv 0 \bmod 2$ and $b'\le 2m_2+2\implies 
b\le 2m_2\implies a'\equiv a\equiv 0 \bmod 2$. So  $(a',b',c')\in 
D(m_1,m_2+1)$ with $b'>c'$ and $c'<2m_2+2$.

 Conversely suppose that $(a',b',c')\in D(m_1,m_2+1)$ with $b'>c'$ and 
$c'<2m_2+2$. If $b'\ge 2m_2+2$ then $c'\equiv 0\bmod 2$ so $c'\le 2m_2\le 
b'-2$ and if $b'<2m_2+2$ then $b'\equiv c' \bmod 2$ and $c'\le b'-2<2m_2$. 
Hence $2m_1\ge a'\ge b'-2\ge c'\ge 0$ and $a'>2m_2$. A parity check as 
above shows that then $(a,b,c)\in D(m_1,m_2)$ with $a=a'\ge b'=b+2$ and 
$a>2m_2$.
 
  Finally suppose that $(a',b',c')\in D(m_1+1,m_2)$, with  $a' =2m_2, b'>c'$.
  Then $b'\equiv c'\bmod 2$ so $b'-2\ge c'$, and $a'=2m_2\ge c'$   again 
giving $(a,b,c)\in D(m_1,m_2)$ with $a=2m_2\ge b+2$. This proves Equation 
\eqref{eq.d2}.\\[0.5mm]

 For Equation \eqref{eq.d3}, put $a'=a,b'=b,c'=c+2$ when $c=2m_2$, and 
$a'=a-2,b'=b-2,c'=c$ otherwise. In either case $s_{a,b,c+2}=s_{a',b',c'}$ since 
we are working in $\fsl_3$. Let $(a,b,c)\in D(m_1,m_2)$ with $b\ge c+2$. 
If $c=2m_2$ then $2m_1\ge a'\ge b'\ge 2m_2+2=c'\ge 0$, and  $(a',b',c')\in 
D(m_1,m_2+1)$ with  $c'=2m_2+2$. Otherwise $c<2m_2\ne 0$. If $b\le2m_2$ then 
$c\le 2m_2-2$. If $b>2m_2$ then $c\equiv 0\bmod 2$ by the parity rules, 
giving again $c\le 2m_2-2$. Then $2m_1-2\ge a-2\ge b-2\ge c\ge 0$ and 
$a-2\ge 2m_2-2\ge c$. So $(a',b',c')\in D(m_1-1,m_2-1)$.
 
  Conversely let $(a',b',c')\in D(m_1-1,m_2-1)$, with $m_2\ne 0$. Then 
$2m_1\ge a'+2\ge b'+2\ge c'\ge 0$ and $a'\ge 2m_2-2\ge c'$, so 
$a'+2\ge 2m_2>c'$. Hence $(a,b,c)\in D(m_1,m_2)$ with $c\ne 2m_2$.
  
  Finally, suppose that $(a',b',c')\in D(m_1,m_2+1)$ with  $c'=2m_2+2$. 
Then $2m_1\ge a'\ge b'\ge 2m_2=c'-2\ge 0$ so that $(a',b',c'-2)=(a,b,c)
\in D(m_1,m_2)$ with $c=2m_2$. This proves Equation \eqref{eq.d3}.\\[0.5mm]

 For Equation \eqref{eq.d4}, put $a'=a+1,b'=b+1,c'=c$. Let $(a,b,c)\in 
D(m_1,m_2)$ with $a=b$.  Then $2m_1+2\ge a'\ge b'\ge c'\ge 0$ and $a'>a\ge 
2m_2\ge c'\ge 0 $. Since $b'=a'>2m_2$ and $c'\equiv 0\bmod 2$ the parity 
rules are satisfied, and $(a',b',c')\in D(m_1+1,m_2)$ with $a'=b', a'>2m_2, 
b'\ne c'$. 
 
 Conversely let $(a',b',c')\in D(m_1+1,m_2)$ with $a'=b', a'>2m_2, b'\ne c'$. 
Now $2a'\le a'+b'+c'=2m_1+2m_2+2\le 4m_1$, since $m_2<m_1$. Then $2m_1>a'-1
\ge b'-1\ge c'\ge 0$ and $a'-1\ge 2m_2\ge c'$. Hence $(a,b,c)\in D(m_1,m_2)$ 
with $a=b$. This proves Equation \eqref{eq.d3}.\\[0.5mm]

 For Equation \eqref{eq.d5}, put $a'=a,b'=b+1,c'=c+1$. Let $(a,b,c)\in 
D(m_1,m_2)$ with $a>b=c$. If $a=2m_2$ then $2m_1+2>a'\ge b'\ge c'\ge 0$ and 
$a'=2m_2\ge c'$. Hence $(a',b',c')\in D(m_1+1,m_2)$ with $a'= 2m_2, b'= c'$. 
Otherwise $a>2m_2$,  and $a'=a\ge 2m_2+2$, since $b=c$, while $2m_2+2\ge 
c+2>c'$. We have also $2m_1\ge a'\ge b' \ge c'\ge 0$. Hence $(a',b',c')
\in D(m_1,m_2+1)$ with $b'=c', c'\ne 2m_2+2$. 
 
 Conversely suppose that $(a',b',c')\in D(m_1,m_2+1)$ with $b'=c', c'< 
2m_2+2$. Now $a'+2c'=2m_1+2m_2+2$ and $a'\le 2m_1$, so $c'>0$. Hence $2m_1
\ge a'>b'-1\ge c'-1\ge 0$ and $a'> 2m_2\ge c'-1$. Then $(a,b,c)\in D(m_1,m_2)$ 
with $a>b=c$ and $a>2m_2$. 
 
 Finally if $(a',b',c')\in D(m_1+1,m_2)$ with $a'= 2m_2, b'= c'$ then 
$b'=c'=m_1+1>0$ and $(a,b,c)\in D(m_1,m_2)$ with $a=2m_2>b=c$.
\end{proof}

\section{A proof of Theorem \ref{psi2} using Carini-Remmel's work}
\lbl{sec.CR}

\subsection{A review of Theorem 5 of \cite{CR}}
\lbl{sub.thm5}

In this section we give an alternative proof of Theorem \ref{psi2}
using the work \cite{CR} of Carini and Remmel. In Theorem 5 of loc.cit.,
Carini and Remmel give 
the expansion of the plethysm $\psi_2(s_{a,b})$ for the Schur function of a 
2-row partition of $n=a+b$ in terms of Schur functions $s_\lambda$, where 
$\lambda$ runs through partitions of $2n$ with at most $4$ parts.
In this expansion each $s_\lambda$ has coefficient $0,\pm1$, depending on the 
parities of the parts of $\lambda$ and some linear inequalities.

In their paper they use the opposite convention to Macdonald, so that 
they take $0\le a\le b$ for the given partition of $n=a+b$ and 
$0\le \l_1\le\l_2\le\l_3\le\l_4$ for the parts of the partition $\l$ 
of $2n$. They also use the more common combinatorial notation $p_2$ 
rather than $\psi_2$.

Theorem 5 of \cite{CR} can be readily restated as follows, by grouping 
separately the partitions $\l$ of $2a+2b$ with $\l_1+\l_3\ge 2a$ and 
those with $ \l_1+\l_3< 2a$ in the expansion of $\psi_2(s_{a,b})$:
 
\begin{itemize}
\item
When $\l_1+\l_3\ge 2a$, $\l_1+\l_2$ is even and $\l_1+\l_2\le 2a$,  
the Schur function $s_\l$ has coefficient $(-1)^{\l_2+\l_3}$.
\item
When $\l_1+\l_3< 2a$, $\l_2+\l_3$ is even,  $2a\le\l_2+\l_3$ and 
$2a\le\l_1+\l_4$, the Schur function $s_\l$ has coefficient $(-1)^{\l_1+\l_2}$.
\item
All other $s_\l$ have coefficient $0$.
\end{itemize}
 
The first of these cases corresponds to the partitions in (ii) and some of 
(i) in \cite[Thm.5]{CR}, while the second corresponds to the partitions 
in (iii) and the remaining partitions in (i).

\subsection{Reformulation of Carini and Remmel's expansion of 
$\psi_2(s_{m_1,m_2})$}

Theorem 5 of \cite{CR} gives rise to an expansion of 
$\psi_2(s_{m_1,m_2}), m_1\ge m_2$,   in Schur functions of $x_1,\ldots,x_N$ 
which is valid for all $N$.

We can reformulate this further by specifying the support set for the 
partitions which appear in the expansion in terms of linear inequalities 
and some parity rules, so that Theorem \ref{psi2}, the case where $N=3$, 
is an immediate corollary.

Using Macdonald's ordering, we take $m_1$ in place of $b$ and $m_2$ in place 
of $a$ from \cite{CR}, and write $(\l_4,\l_3,\l_2,\l_1)=(a,b,c,d)=\l$. 
\begin{definition} For $m_1,~m_2\in\mathbb{N}$, let  $A(m_1,m_2)\subset \mathbb{N}^4$ denote the set of tuples $(a,b,c,d)$ that satisfy
\begin{itemize}
\item $a+b+c+d=2m_1+2m_2,\  a \ge b \ge c \ge d \ge 0 ,\  2m_1 \ge a+d \ge 2m_2 \ge c+d$
\item if $ b+d \ge 2m_2$ then $ c \equiv d \bmod 2$
\item if $ b+d \le 2m_2$ then $ a \equiv d \bmod 2$
\end{itemize}
\end{definition}

\begin{theorem}
\lbl{psi2N}
Let $m_1\ge m_2\ge 0$.  Then
$$\psi_2(s_{m_1,m_2})=\sum_{(a,b,c,d)\in A(m_1,m_2)} (-1)^{b+d}s_{a,b,c,d}.$$
\end{theorem}

Theorem \ref{psi2} is an immediate corollary, since Schur functions for 
partitions with more than $3$ rows are $0$ in $\fsl_3$, and the support 
set $A(m_1,m_2)$ becomes $D(m_1,m_2)$ when $d=0$. 

We can see readily that theorem \ref{psi2N} follows from Theorem 5 of \cite{CR} as 
rearranged above.  

Firstly, for $\l\in A(m_1,m_2)$ with $b+d\ge 2m_2$ we have $c+d$ even, by 
the parity rule, and $c+d\le 2m_2$, while the coefficient of $s_\l$ is 
$(-1)^{b+d}=(-1)^{b+c}$. This agrees with  the first group of partitions above. 
The condition $2m_1\ge a+d$ does not impose any extra restriction on this 
group, since it is equivalent to $b+c\ge 2m_2$.   

For $\l\in A(m_1,m_2)$ with $b+d\le 2m_2$ we have $a+d$ even, and hence 
$b+c$ even, by the parity rule. In addition we have $2m_2\le b+c$  since 
$2m_1\ge a+d$, and $2m_2\le a+d$. Again this agrees with the second group 
of partitions above, and the coefficient of $s_\l$ is $(-1)^{b+d}=(-1)^{c+d}$ 
as required there.

\subsection{Parametrisation}
\lbl{sub.parametrisation}

Theorem \ref{psi2N} can be used to give a parametrisation of these two 
sets of Schur 
functions with non-zero coefficient, each in terms of $3$ integer 
parameters satisfying some linear inequalities. These in turn give a 
parametric formula for $\psi_2(s_{m_1,m_2})$, with a reduction in the case of 
$\fsl_3$ 
to the formulae of Theorem \ref{thm.2}. 

\subsubsection{The first group of Schur functions}
\lbl{subsub1}

 Parametrise $\{A(m_1,m_2):b+d\ge 2m_2\}$ by setting $b+d=2m_2+k, k\ge0$. 
Write $c=d+2l, l\ge0$ to get $c\equiv d\bmod2$. The condition $c+d\le 2m_2$ 
is equivalent to $d+l\le m_2$. This ensures that $c\le b$. Then 
$a=2m_1-k-2l-d$, which satisfies $2m_1\ge a+d$. To ensure that $a\ge b$ we   
impose the condition $a+d=2m_1-k-2l\ge b+d=2m_2+k$ to finish with parameters 
$k,l,d\ge 0, d+l\le m_2, k+l\le m_1-m_2$. 
  
  The contribution of the partitions $\l$ with $b+d\ge 2m_2$ is then
$$
\sum (-1)^k s_\l,  \mbox{ where } 
\l=(2m_1-k-2l-d,2m_2+k-d,2l+d,d)
$$ 
and $k,l,d$ are integer parameters with 
$ k,l,d \ge0,  d+l\le m_2, k+l\le m_1-m_2$.
 
\subsubsection{The second group of Schur functions} 
\lbl{subsub2}
  
   Parametrise $\{A(m_1,m_2):b+d\le 2m_2\}$ by setting $b+d=2m_2-k, k\ge0$. 
 Write $a+d=2m_1-2l,l\ge 0$ to get $a\equiv d\bmod 2$ and $2m_1\ge  a+d$.
 Then $b+c=2m_2+2l$, so $c\ge d$. 
 The condition $2m_2\le a+d$ is equivalent to $l\le m_1-m_2$. This ensures 
that $b\le a$.  
 
 Now $b=2m_2-k-d$ so $c=2l+k+d$ so $c\le b$ is equivalent to $l+k+d\le m_2$.

 
  The contribution of the partitions $\l$ with $b+d\le 2m_2$ is
$$
\sum (-1)^k s_\l,  \mbox{ where } 
\l=(2m_1-2l-d,2m_2-k-d,2l+k+d,d)
$$ 
and $k,l,d$ are integer parameters with 
$ k,l,d \ge0,  l+k+d\le m_2, l\le m_1-m_2$.

\subsection{Reduction to the case of $\fsl_3$.}
\lbl{sub.reduction}

In the special case of $\fsl_3$ we have $d=0$, and we get two double sums 
of 3-row Schur functions, one for partitions with $b\ge 2m_2$, and one for 
those with $b<2m_2$, to avoid double counting those with $b=2m_2$.
Since we are working in $\fsl_3$ this can be reduced further to sums over 
$2$-row partitions, since $s_{a,b,c}=s_{a-c,b-c}$ 

Explicitly we have from the first group of partitions the sum
$$
\sum (-1)^k s_{2m_1-4l-k, 2m_2-2l+k}
$$ 
taken over $k,l\ge0, l\le m_2,k+l\le m_1-m_2$. The second group yields 
$$ 
\sum (-1)^k s_{2m_1-4l-k, 2m_2-2l-2k}
$$ 
taken over $l\ge 0, k>0, k+l\le m_2, l\le m_1-m_2$. This gives a second
proof of Theorem \ref{thm.2}.    
It may be preferable all the same to retain the 3-row format when estimating 
the 
effects of twists in $\fsl_3$ as then all the partitions have $2m_1+2m_2$ 
cells and thus their twist factors depend only on the total content of 
the partition.  

%

\section{Sample computations}
\lbl{sec.compute}

In this section we give some sample computations of Theorems \ref{thm.1}
and \ref{thm.2}. Theorem \ref{thm.1} implies that:

{\small
\begin{math}
J_{T(2,3),5,7}(1/q)=q^{24}+q^{30}+q^{32}-q^{35}+q^{36}+2 q^{38}-q^{39}-q^{41}+q^{42}-q^{43}+2 q^{44}-q^{45}-2 q^{47}+q^{48}-q^{49}+2 q^{50}-2 q^{51}+q^{52}-2 q^{53}-2 q^{55}+3 q^{56}
-2 q^{57}+2 q^{58}-2 q^{59}-q^{60}-q^{61}+2 q^{62}-4 q^{63}+3 q^{64}+q^{66}-q^{67}+q^{68}-3 q^{69}+3 q^{70}-2 q^{71}+3 q^{72}+q^{73}-q^{74}-q^{75}-2 q^{77}+2 q^{78}+q^{79}
+2 q^{80}-2 q^{82}-q^{83}+q^{85}+2 q^{86}-3 q^{88}+q^{89}-2 q^{90}-q^{92}+2 q^{93}+q^{94}+2 q^{95}-3 q^{96}+q^{97}-2 q^{98}+q^{99}+q^{100}+2 q^{101}-2 q^{102}+3 q^{103}
-5 q^{104}-q^{106}+3 q^{107}+2 q^{108}+4 q^{109}-4 q^{110}+3 q^{111}-3 q^{112}-2 q^{113}+q^{114}+q^{115}-q^{116}+5 q^{117}-5 q^{118}-2 q^{119}-2 q^{121}+2 q^{122}
+5 q^{123}-2 q^{124}+q^{125}-q^{126}-4 q^{127}-q^{129}-q^{130}+4 q^{131}-q^{132}-2 q^{133}+2 q^{134}-q^{135}+q^{136}+q^{137}-2 q^{138}+2 q^{139}+3 q^{140}-3 q^{141}
+2 q^{142}-2 q^{143}-4 q^{144}+2 q^{145}+6 q^{148}-2 q^{149}-q^{151}-6 q^{152}+3 q^{153}+4 q^{154}-q^{155}+3 q^{156}-4 q^{157}-4 q^{158}+3 q^{159}-3 q^{160}+2 q^{161}
+4 q^{162}-3 q^{163}+4 q^{164}-2 q^{165}-4 q^{166}+5 q^{167}+2 q^{170}-6 q^{171}+2 q^{172}+3 q^{173}-4 q^{174}+q^{175}+q^{176}-3 q^{177}+5 q^{178}-2 q^{179}-2 q^{180}
+4 q^{181}-2 q^{183}-q^{184}-6 q^{185}+3 q^{186}+2 q^{187}+2 q^{189}+q^{190}-5 q^{191}+2 q^{192}-q^{193}-q^{194}+5 q^{195}+2 q^{196}-q^{197}-q^{198}-5 q^{199}+3 q^{201}
-2 q^{202}+q^{203}+3 q^{204}-2 q^{205}+q^{206}-5 q^{208}+4 q^{209}+2 q^{210}-3 q^{213}-3 q^{214}+4 q^{215}-2 q^{216}+2 q^{217}+3 q^{218}-2 q^{219}-4 q^{222}+5 q^{223}
+2 q^{224}-2 q^{225}-3 q^{227}-3 q^{228}+3 q^{229}-q^{230}+3 q^{232}-2 q^{233}+q^{234}+2 q^{235}-3 q^{236}+q^{237}+q^{238}-2 q^{239}+3 q^{240}-q^{241}-q^{242}+2 q^{243}
-4 q^{244}-2 q^{245}+2 q^{246}+4 q^{248}+2 q^{249}-3 q^{250}-2 q^{252}-2 q^{253}+3 q^{254}+2 q^{256}+2 q^{257}-3 q^{258}-3 q^{259}-2 q^{260}+q^{261}+4 q^{262}+q^{263}
+q^{264}-q^{265}-3 q^{266}-2 q^{267}+q^{268}+q^{269}+2 q^{270}+q^{271}-q^{272}-q^{273}-q^{274}+q^{275}
\end{math}
}

Theorem \ref{thm.2} implies that:

{\small
\begin{math}
\psi_2(V_{5,7})=V_{0,4}-V_{0,7}+V_{0,10}-V_{0,13}+V_{0,16}-V_{0,19}-V_{1,2}+V_{2,0}+V_{2,6}-V_{2,9}+V_{2,12}-V_{2,15}+V_{2,18}-V_{3,4}+V_{4,2}+V_{4,8}
-V_{4,11}+V_{4,14}-V_{4,17}-V_{5,0}-V_{5,6}+V_{6,4}+V_{6,10}-V_{6,13}+V_{6,16}-V_{7,2}-V_{7,8}+V_{8,0}+V_{8,6}+V_{8,12}-V_{8,15}-V_{9,4}-V_{9,10}+V_{10,2}
+V_{10,8}+V_{10,14}-V_{11,0}-V_{11,6}-V_{11,12}+V_{12,4}+V_{12,10}-V_{13,2}-V_{13,8}+V_{14,0}+V_{14,6}-V_{15,4}+V_{16,2}-V_{17,0}
\end{math}
}
where $V_{n_1,n_2}=V_{n_1 \om_1 + n_2 \om_2}$.

For future checks with other formulas, Theorem \ref{thm.1} implies that
$J_{2,3,70,70}(1/q)$ is a polynomial of $q$ with exponents with respect to $q$
in the interval $[280,30100]$ (where the end points are attained), leading
and trailing coefficients $1$ and
coefficients in the interval $[-55196, 65594]$, where the coefficient
$-55196$ is attained at precisely at $q^{18854}$ and $q^{18925}$ and the 
coefficient $65594$ is attained precisely at $q^{18165}$. In other words,
we have
$$
J_{2,3,70,70}(1/q)=q^{280} + \dots + 65594 q^{18165} + \dots
- 55196 q^{18854} + \dots -55196 q^{18925} + \dots + q^{30100}
$$

Using Theorem \ref{thm.1} it is possible to compute the colored
Jones polynomials $J_{T(2,3),n_1,n_2}(q)$ for $n_1,n_2=0,\dots,100$. 

\subsection{Acknowledgment}
The paper came into maturity following requests for explicit formulas
for the colored Jones polynomial of a knot, during visits of the first author
in the Max-Planck-Institut f\"ur Mathematik in 2009-2010, and during an
Oberwolfach workshop in August 2010. The first author  
wishes to thank R. Lawrence and D. Zagier for their interest,
J. Stembridge for enlightening conversations and the
organizers of the Oberwolfach workshop 1033/2010,
P. Gunnells, W. Neumann, A.S. Sikora and D. Zagier 
for their superb hospitality.

\bibliographystyle{hamsalpha}\bibliography{biblio}

\providecommand{\bysame}{\leavevmode\hbox to3em{\hrulefill}\thinspace}
\providecommand{\href}[2]{#2}
\providecommand{\eprint}{\begingroup \urlstyle{rm}\Url}
\begin{thebibliography}{CGR84}

\bibitem[BN05]{B-N}
Dror Bar-Natan, \emph{Knotatlas}, 2005, \url{http://katlas.org}.

\bibitem[CGR84]{CGR}
Y.~M. Chen, A.~M. Garsia, and J.~Remmel, \emph{Algorithms for plethysm},
  Combinatorics and algebra ({B}oulder, {C}olo., 1983), Contemp. Math.,
  vol.~34, Amer. Math. Soc., Providence, RI, 1984, pp.~109--153.

\bibitem[CR98]{CR}
Luisa Carini and J.~B. Remmel, \emph{Formulas for the expansion of the
  plethysms {$s_2[s_{(a,b)}]$} and {$s_2[s_{(n^k)}]$}}, Discrete Math.
  \textbf{193} (1998), no.~1-3, 147--177, Selected papers in honor of Adriano
  Garsia (Taormina, 1994).

\bibitem[FH91]{FH}
William Fulton and Joe Harris, \emph{Representation theory}, Graduate Texts in
  Mathematics, vol. 129, Springer-Verlag, New York, 1991, A first course,
  Readings in Mathematics.

\bibitem[GK10]{GK}
Stavros Garoufalidis and Christoph Koutschan, \emph{The {$\mathfrak{sl}_3$}
  jones polynomial of the trefoil: a case study of {$q$}-holonomic recursions},
  2010, \eprint{arXiv:1011.6329}, Preprint.

\bibitem[GV]{GV}
Stavros Garoufalidis and Thao Vuong, \emph{The degree conjecture for torus
  knots}, preprint 2010.

\bibitem[Jan96]{Ja}
Jens~Carsten Jantzen, \emph{Lectures on quantum groups}, Graduate Studies in
  Mathematics, vol.~6, American Mathematical Society, Providence, RI, 1996.

\bibitem[Jon87]{Jo}
V.~F.~R. Jones, \emph{Hecke algebra representations of braid groups and link
  polynomials}, Ann. of Math. (2) \textbf{126} (1987), no.~2, 335--388.

\bibitem[Law03]{La}
Ruth Lawrence, \emph{The {$\rm PSU(3)$} invariant of the {P}oincar\'e homology
  sphere}, Proceedings of the {P}acific {I}nstitute for the {M}athematical
  {S}ciences {W}orkshop ``{I}nvariants of {T}hree-{M}anifolds'' ({C}algary,
  {AB}, 1999), vol. 127, 2003, pp.~153--168.

\bibitem[Mac95]{Mc}
I.~G. Macdonald, \emph{Symmetric functions and {H}all polynomials}, second ed.,
  Oxford Mathematical Monographs, The Clarendon Press Oxford University Press,
  New York, 1995, With contributions by A. Zelevinsky, Oxford Science
  Publications.

\bibitem[MM08]{MM}
H.~R. Morton and P.~M.~G. Manch{\'o}n, \emph{Geometrical relations and
  plethysms in the {H}omfly skein of the annulus}, J. Lond. Math. Soc. (2)
  \textbf{78} (2008), no.~2, 305--328.

\bibitem[Mor95]{Mo}
H.~R. Morton, \emph{The coloured {J}ones function and {A}lexander polynomial
  for torus knots}, Math. Proc. Cambridge Philos. Soc. \textbf{117} (1995),
  no.~1, 129--135.

\bibitem[RJ93]{JR}
Marc Rosso and Vaughan Jones, \emph{On the invariants of torus knots derived
  from quantum groups}, J. Knot Theory Ramifications \textbf{2} (1993), no.~1,
  97--112.

\bibitem[Tur88]{Tu1}
V.~G. Turaev, \emph{The {Y}ang-{B}axter equation and invariants of links},
  Invent. Math. \textbf{92} (1988), no.~3, 527--553.

\bibitem[Tur94]{Tu2}
\bysame, \emph{Quantum invariants of knots and 3-manifolds}, de Gruyter Studies
  in Mathematics, vol.~18, Walter de Gruyter \& Co., Berlin, 1994.

\end{thebibliography}
\end{document}
\endinput